\newcommand{\nc}{\newcommand}
\numberwithin{equation}{section}
\newtheorem{thm}{Theorem}[section]
\newtheorem{prop}[thm]{Proposition}
\newtheorem{lem}[thm]{Lemma}
\newtheorem{cor}[thm]{Corollary}
\newtheorem{rem}[thm]{Remark}
\newtheorem{example}[thm]{Example}
\newtheorem*{thma}{Theorem A}
\nc{\gl}{\mathfrak{gl}}
\nc{\GL}{\mathfrak{GL}}
\nc{\g}{\mathfrak{g}}
\nc{\gh}{\widehat\g}
\nc{\h}{\mathfrak{h}}
\nc{\la}{\lambda}
\nc{\al}{\alpha }
\nc{\be}{\beta }
\nc{\ve}{\varepsilon }
\nc{\om}{\omega }
\nc{\ta}{\theta}
\nc{\ch}{{\mathop {\rm ch}}}
\nc{\Tr}{{\mathop {\rm Tr}\,}}
\nc{\Id}{{\mathop {\rm Id}}}
\nc{\ad}{{\mathop {\rm ad}}}
\nc{\bra}{\langle}
\nc{\ket}{\rangle}
\nc{\x}{{\bf x}}
\nc{\bV}{{\bf V}}
\nc{\bs}{{\bf s}}
\nc{\br}{{\bf r}}
\nc{\bb}{{\bf b}}
\nc{\bk}{{\bf k}}
\nc{\bp}{{\bf p}}
\nc{\pa}{\partial}
\nc{\ld}{\ldots}
\nc{\cd}{\cdots}
\nc{\hk}{\hookrightarrow}
\nc{\T}{\otimes}
\nc{\gr}{\mathrm{gr}}
\nc{\ov}{\overline}
\nc{\cO}{\mathcal O}
\nc{\msl}{\mathfrak{sl}}
\nc{\mgl}{\mathfrak{gl}}
\nc{\U}{\mathrm U}
\nc{\V}{\EuScript V}
\nc{\cL}{\mathcal{L}}
\nc{\Res}{\mathrm{Res\ }}
\newcommand{\bC}{{\mathbb C}}
\newcommand{\bQ}{{\mathbb Q}}
\newcommand{\bZ}{{\mathbb Z}}
\newcommand{\bR}{{\mathbb R}}
\newcommand{\fh}{{\mathfrak h}}
\newcommand{\fg}{{\mathfrak g}}
\nc{\Q}{\mathfrak Q}
\begin{document}

\title[Large tensor products and Littlewood-Richardson coefficients]
{Large tensor products and Littlewood-Richardson coefficients}

\author{Evgeny Feigin}
\address{Evgeny Feigin:\newline
Department of Mathematics, HSE University, Russian Federation,
Usacheva str. 6, 119048, Moscow,\newline
{\it and }\newline
Skolkovo Institute of Science and Technology, Skolkovo Innovation Center, Building 3,
Moscow 143026, Russia
}
\email{evgfeig@gmail.com}
\keywords{Lie algebras, tensor products, asymptotic representation theory}

\begin{abstract}
The Littlewood-Richardson coefficients describe the decomposition of tensor products of irreducible representations
of a simple Lie algebra into irreducibles. Assuming the number of factors is large, one gets a measure on the space of weights.
This limiting measure was extensively studied by many authors. In particular, Kerov computed the corresponding density in a special case
in type A and Kuperberg gave a formula for the general case. The goal of this paper is to give a short, self-contained and 
pure Lie theoretic proof of the formula for the density of the limiting measure. Our approach is based on the link between the limiting 
measure induced by the Littlewood-Richardson coefficients and the measure defined by the weight multiplicities of the tensor
products.
\end{abstract}

\maketitle

\section*{Introduction}
In this paper we consider a problem from the asymptotic representation theory, which attracts 
a lot of attention during the last decades (see e.g. \cite{BOO,VK1,VK2,LS,GP,K2,Kup1,Kup2}).
Namely, let $\fg$ be a simple complex finite-dimensional Lie algebra, $\fh\subset\fg$ a Cartan subalgebra, $V_1,\dots,V_k$ -- irreducible 
finite-dimensional $\fg$-modules. 
We are interested in the multiplicities of irreducible 
$\fg$-modules  in the tensor product $\bigotimes_{i=1}^k V_i^{\otimes N_i}$ for large $N_1,\dots,N_k$.
The problem in various regimes was considered in several papers (see e.g. \cite{B1,B2,B3,OC,CS}). 
The square root regime (see below for the precise formulation) we are interested in  
is addressed in \cite{CS,K1,Kup1,NP,TZ,PR}. In particular, Kerov obtained an explicit formula in a very special case \cite{K1}
and Kuperberg \cite{Kup1} gave a general formula (see also \cite{B3,TZ,PR}). Our goal is to give a short proof 
of the limiting density formula in pure representation theoretic terms. Our approach can be used to attack the case of 
representations of infinite-dimensional Lie algebras, such as current algebras and affine Kac-Moody Lie algebras. 

For simplicity in the introduction we consider a special case of the problem: we assume that $\fg$ is 
simply-laced (of type ADE) and $k=1$ (the notation in the general case are more heavy, but conceptually
the special case is of the same level of difficulty). So let $V_\la$ be an irreducible highest weight representation of $\fg$
and let
$
V_\la^{\otimes N}=\bigoplus_{\mu} [V_\la^{\otimes N}:V_\mu]V_\mu
$
be the decomposition into irreducible summands $V_\mu$, where $\mu$ belongs to the set $P_+$ of integral
dominant weights. We consider the sequence of random variables $\eta(N)$ with values of the form $\frac{\mu}{\sigma\sqrt{N}}$
distributed with the following law ($\sigma$ is certain constant explicitly computed in representation-theoretic
terms):
\begin{equation}\label{P1}
P\left(\eta(N)=\frac{\mu}{\sigma\sqrt{N}}\right)=\frac{[V_\la^{\otimes N}:V_\mu]\dim V_\mu}{(\dim V_\la)^N}
\end{equation}
(for the similar constructions see \cite{BG,H,LLP1,LLP2,OO}).
Our goal is to find the $N\to\infty$ limit of the random variables $\eta(N)$. To formulate the result we prepare some notation.
Let $\omega_i$, $i=1,\dots,r$ be the set of fundamental weights, let $\fh_{\ge 0}$ be the $\bR_{\ge 0}$ span of the set 
$\{\om_i\}_{i=1}^r$. Also let $R_+$ be the set of positive roots of $\fg$ summing up to $2\rho$. Finally, let $C$
be the Cartan matrix of $\fg$ and $(\cdot,\cdot)$ be the standard nondegenerate invariant symmetric bilinear form. 
We prove the following theorem (see \cite{B3,CS,Kup1,PR,TZ}).
\begin{thma}
The random variables $\eta(N)$ converge in distribution to the random variable $\eta$ taking values in
$\fh_{\ge 0}$ with the distribution defined by the density
\[
p_\eta(x)=\frac{\sqrt{\det C^{-1}}}{(2\pi)^{r/2}}\frac{\prod_{\al\in R_+} (x,\al)^2}{\prod_{\al\in R_+}(\rho,\al)} \exp(-(x,x)/2), 
\]
where $x=\sum_{i=1}^r x_i\om_i$, $x_i\ge 0$ and the integration (defining the limiting distribution) is performed against the standard
form $dx_1\dots dx_r$. 
\end{thma}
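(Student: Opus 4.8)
The plan is to reduce everything to the (much simpler) measure given by the \emph{weight} multiplicities of $V_\la^{\otimes N}$. Write $m_N(\mu)=[V_\la^{\otimes N}:V_\mu]$ and $w_N(\nu)=\dim (V_\la^{\otimes N})_\nu$, and for a root $\al$ let $S_\al$ be the shift operator $(S_\al f)(\mu)=f(\mu+\al)$ on functions on the weight lattice. The Weyl character formula, the Weyl denominator identity $\sum_{w\in W}(-1)^{\ell(w)}e^{w\rho-\rho}=\prod_{\al\in R_+}(1-e^{-\al})$, and the $W$-invariance of $\nu\mapsto w_N(\nu)$ give the classical identity
\[
m_N(\mu)=\prod_{\al\in R_+}(1-S_\al)\,w_N(\mu).
\]
This is the only place where the tensor product decomposition enters; from here on the problem is purely about the asymptotics of $w_N$ acted on by a difference operator of the fixed order $|R_+|$.

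Next I would run a local central limit theorem for $w_N$. The normalized weight measure $\nu\mapsto w_N(\nu)/(\dim V_\la)^N$ is the $N$-fold convolution of the weight measure of $V_\la$ -- a finitely supported measure living on a translate of the root lattice $Q$, with mean $0$ (by the $W$-symmetry of weights) and covariance a positive multiple $\sigma^2$ of the invariant form (a $W$-invariant quadratic form, hence a multiple of $(\cdot,\cdot)$ by Schur's lemma); this $\sigma^2$ one reads off from the Casimir, e.g.\ $\sigma^2=\frac1r\,\mathbb E(\nu,\nu)$. The Fourier proof of the local CLT then yields, uniformly on compacts and together with derivatives up to order $|R_+|+1$,
\[
\frac{w_N(\nu)}{(\dim V_\la)^N}=\frac{\det C}{(\sigma\sqrt N)^{r}}\Big(g\big(\tfrac{\nu}{\sigma\sqrt N}\big)+o(1)\Big),\qquad g(x)=\frac{\sqrt{\det C^{-1}}}{(2\pi)^{r/2}}\exp(-(x,x)/2),
\]
where $\det C$ is the covolume of $Q$ with respect to $dx_1\dots dx_r$ (with $x=\sum x_i\om_i$), and $\det C^{-1}$ is the determinant of the Gram matrix of $(\cdot,\cdot)$ in the basis $\{\om_i\}$. (If the differences of weights of $V_\la$ span only a sublattice of $Q$ one keeps track of the relevant cosets; the limit is unchanged.)

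Now apply the difference operator. On functions of $\nu/(\sigma\sqrt N)$ the operator $1-S_\al$ acts as $-(\sigma\sqrt N)^{-1}\pa_\al$ plus terms of lower order, so $\prod_{\al\in R_+}(1-S_\al)$ equals $(-1)^{|R_+|}(\sigma\sqrt N)^{-|R_+|}$ times the constant-coefficient operator $\prod_{\al\in R_+}\pa_\al$ in the rescaled variable, up to a relative error $O(N^{-1/2})$; the control on the derivatives of $w_N$ from the local CLT is exactly what prevents the $N^{|R_+|/2}$ amplification of errors from spoiling the leading term. The algebraic point that makes the answer clean is the identity
\[
\Big(\prod_{\al\in R_+}\pa_\al\Big)g=(-1)^{|R_+|}\Big(\prod_{\al\in R_+}(x,\al)\Big)\,g :
\]
the left-hand side is $W$-anti-invariant, since $w\in W$ permutes $R_+$ up to the sign $(-1)^{\ell(w)}$, and it equals a polynomial of degree at most $|R_+|$ times $g$; hence that polynomial is divisible by the anti-invariant $\prod_{\al\in R_+}(x,\al)$, which already has degree $|R_+|$, so it equals $\prod_{\al\in R_+}(x,\al)$ up to the scalar read off from the top-degree coefficient. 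Combining this with the first identity, the local CLT, and the Weyl dimension formula $\dim V_\mu=\prod_{\al\in R_+}(\mu+\rho,\al)/\prod_{\al\in R_+}(\rho,\al)\sim(\sigma\sqrt N)^{|R_+|}\prod_{\al\in R_+}(x,\al)/\prod_{\al\in R_+}(\rho,\al)$ with $x=\mu/(\sigma\sqrt N)$, one gets
\[
P\Big(\eta(N)=\tfrac{\mu}{\sigma\sqrt N}\Big)=\frac{m_N(\mu)\dim V_\mu}{(\dim V_\la)^N}=\frac{\det C}{(\sigma\sqrt N)^{r}}\,\frac{\prod_{\al\in R_+}(x,\al)^2}{\prod_{\al\in R_+}(\rho,\al)}\,g(x)\,(1+o(1)).
\]

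Finally, the values of $\eta(N)$ form a coset of a lattice of covolume $\det C/(\sigma\sqrt N)^r$ inside $\fh_{\ge 0}$, so the right-hand side above is (cell volume)$\cdot p_\eta(x)(1+o(1))$ with $p_\eta$ exactly as in the statement; thus the laws of $\eta(N)$ are Riemann sums converging to the integral of $p_\eta$ against $dx_1\dots dx_r$. To promote pointwise convergence of densities to convergence in distribution -- and to obtain $\int p_\eta=1$ for free -- one uses that the weight measure of $V_\la$ has bounded support, so $w_N(\nu)/(\dim V_\la)^N$, and hence $P(\eta(N)=\mu/(\sigma\sqrt N))$ after multiplication by the polynomially bounded dimension, is dominated uniformly in $N$ by a fixed integrable Gaussian-type function of $x$; dominated convergence then finishes the argument. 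I expect the technical heart of the proof to be precisely this quantitative local limit theorem -- convergence of $w_N/(\dim V_\la)^N$ in $C^{|R_+|+1}$ on compacts -- needed so that the $|R_+|$-fold difference operator still sees the correct leading term, together with the bookkeeping of the weight lattice versus the root lattice that produces $\det C^{-1}$ (and not $\det C$) in the final density.
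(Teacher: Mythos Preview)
Your plan is correct and is, at its core, the same strategy as the paper's: pass from irreducible multiplicities to weight multiplicities via the Weyl denominator identity, show the weight measure has a Gaussian limit, and extract the factor $\prod_{\al>0}(x,\al)^2$ as the product of (i) a factor coming from the alternating Weyl sum / difference operator and (ii) a factor coming from the Weyl dimension formula. The paper writes the first step as $m_N(\mu)\,{=}\,\sum_{w\in W}(-1)^w w_N(\mu+\rho-w\rho)$ and evaluates the limit of the alternating sum by substituting $z=e^{-\nu/(N\sigma^2)}$ into the product form $\sum_w(-1)^w z^{w\rho}=\prod_{\al>0}(z^{\al/2}-z^{-\al/2})$; you write it as $m_N=\prod_{\al>0}(1-S_\al)\,w_N$, let the difference operator become $(-\sigma\sqrt N)^{-|R_+|}\prod_{\al>0}\pa_\al$, and compute $\prod_\al\pa_\al$ on the Gaussian via the $W$-anti-invariance/degree argument. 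These are two presentations of the same calculation.

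The one substantive difference is analytic rather than algebraic. The paper establishes only weak convergence of the weight measure (via characteristic functions, its Theorem~\ref{TP}) and then somewhat informally ``replaces'' $c_{\nu-w\rho}$ by the Gaussian value when passing to the limit in the alternating sum; you correctly identify that what is really needed is a \emph{local} CLT for $w_N$ with uniform control on finite differences of order $|R_+|$ (so that the $(\sigma\sqrt N)^{|R_+|}$ amplification does not destroy the error term), together with a dominated-convergence/tightness argument to upgrade pointwise density asymptotics to convergence in distribution. In that respect your outline is analytically more careful than the paper's proof, while the algebraic content is identical.
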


We note that the limiting density does not depend on the representation $V_\la$. 
Recall that in the introduction we assume $\fg$ is simply-laced (in particular, $C$ is symmetric), so 
$(x,x)=\sum_{i,j=1}^r (C^{-1})_{i,j}x_ix_j$.  Also, if $\al=\sum_{i=1}^r l_i\al_i$ is the decomposition of a positive
roots into the sum of simples, then $(x,\al)=\sum_{i=1}^r x_il_i$.

In order to prove Theorem A we consider a simpler sequence of distributions. Namely, we consider another sequence
of random variables $\xi(N)$ with values in $P/(\sigma\sqrt{N})$ ($P$ is the weight lattice) with the following distribution:
\[
P\left(\xi(N)=\frac{\nu}{\sigma\sqrt{N}}\right)= \frac{\dim V_\la^{\otimes N}(\nu)}{(\dim V_\la)^N},
\]
where for a $\fg$-module $W$ and a weight $\nu$ we denote by $W(\nu)$ the $\fh$-eigenspace in $W$ of weight $\nu$.
We prove that
the random variables $\xi(N)$ converge in distribution to the random variable $\xi$ taking values in
$\fh_\bR$ with the distribution defined by the density
\begin{equation}\label{B}
p_\xi(x)=\frac{\sqrt{\det C^{-1}}}{(2\pi)^{r/2}}\exp(-(x,x)/2), 
\end{equation}
where $x=\sum_{i=1}^r x_i\om_i$, $x_i\in\bR$ and the integration is performed against the standard
form $dx_1\dots dx_r$. 

Recall that Kerov \cite{K1} considered the case $\fg=\mgl_n$ and $\la=\om_1$. He showed that the limiting density is given by
the density of the GUE eigenvalues. One easily shows that the restriction of the GUE eigenvalues density to the traceless
matrices produces exactly the density from Theorem A.

Finally, let us briefly comment on the related papers. For the best of our knowledge, Theorem A in the full generality 
first appeared in \cite{Kup1} (the paper mainly treats the type A case in a combinatorial manner). In \cite{B3,TZ}
the authors give asymptotic formulas for the Littlewood-Richardson coefficients (as well as for the weights multiplicities
in the tensor powers of irreducible representations and for certain lattice paths). The paper \cite{CS} describes
the link between the random matrix theory and the tensor products decomposition problem. In the recent paper \cite{PR}
Theorem A is derived from the asymptotic formula for the Littlewood-Richardson coefficients expressed via certain function 
called the large deviation 
rate function. We note that our proof utilizes only the Lie theoretic tools. This allows to obtain the limiting
density without using asymptotic formulas for the Littlewood-Richardson coefficients.

Our paper is organized as follows. In Section \ref{Notation} we collect main notation and definitions from the Lie theory
and probability theory to be used in the main body of the paper. In Section  \ref{WM} we prove \eqref{B}, which is used in
Section \ref{DiI} to prove Theorem A. In Section \ref{TypeA} we work out the type $A$ case explicitly; in particular we show 
the connection with the GUE case. 

We close with several examples for low rank algebras.

Type $A_1$. 
The densities of the random variables $\xi$ and $\eta$ are given by
\begin{gather*}
p_{\xi}(x)=\frac{\sqrt{1/2}}{\sqrt{2\pi}}\exp(-x^2/4), x\in\bR,\\
p_{\eta}(x)=\frac{\sqrt{1/2}}{\sqrt{2\pi}} x^2\exp(-x^2/4), x\in\bR_{\ge 0}.
\end{gather*}

Type $A_2$.
The densities of the random variables $\xi$ and $\eta$ are given by
\begin{gather*}
p_{\xi}(x)=\frac{\sqrt{1/3}}{2\pi}\exp\left(-\frac{x^2+xy+y^2}{3}\right), x,y\in\bR,\\
p_{\eta}(x)=\frac{\sqrt{1/3}}{2\pi}\frac{x^2y^2(x+y)^2}{2}\exp\left(-\frac{x^2+xy+y^2}{3}\right), x,y\in\bR_{\ge 0}.
\end{gather*}

Type $B_2$.
The densities of the random variables $\xi$ and $\eta$ are given by
\begin{gather*}
p_{\xi}(x)=\frac{\sqrt{1/4}}{2\pi}\exp\left(-\frac{x^2+xy+y^2/2}{2}\right), x,y\in\bR,\\
p_{\eta}(x)=\frac{\sqrt{1/4}}{2\pi}\frac{x^2(y/2)^2(x+y)^2(x+y/2)^2}{3/2}\exp\left(-\frac{x^2+xy+y^2/2}{2}\right), x,y\in\bR_{\ge 0}.
\end{gather*}

\vskip 5pt
\noindent
\textbf{Acknowledgments}. We are grateful to Vadim Gorin, Alexander Kolesnikov, Grigori Olshanski, Olga Postnova, and
Nikolai Reshetikhin for useful discussions and correspondence. 
The research was supported by the RSF grant 19-11-00056.

\section{Notation}\label{Notation}
\subsection{Lie algebras} All the material below can be found in the standard textbooks, see e.g. \cite{C,Kir}.

Let $\fg$ be a simple Lie algebra and let $\fh\subset \fg$ be a Cartan subalgebra. 
Let $R=R_+\sqcup R_-$ be the set of roots written as a disjoint union of positive and negative roots and
let $\al_1,\dots,\al_r\in R_+$ be the set of simple roots, where $r=\dim\fh$ is the rank of $\fg$. In particular, 
each root from $R_+$ can be expressed as a linear combination of simple roots with nonnegative integer coefficients.
We denote by $\rho$ the half sum of all the positive roots, $\rho=\frac{1}{2}\sum_{\al>0} \al$, where here and below we write
$\al>0$ for $\al\in R_+$. 

A symmetric bilinear form $(\cdot,\cdot)$ on $\fg$ is called invariant if $([x,y],z)=(x,[y,z])$. It is known that any
two nondegenerate invariant symmetric bilinear forms of $\fg$ are proportional. The restriction of such a form on the Cartan
subalgebra is nondegenerate and hence induces bilinear form on $\fh^*$.  
The Cartan matrix $C=(c_{i,j})_{i,j=1}^r$ is defined by 
$c_{i,j}=2(\al_i,\al_j)/(\al_i,\al_i)$, where $(\cdot,\cdot)$ is an invariant from. In the simply-laced ADE case 
(the lengths of all simple roots squares are equal) 
the Cartan matrix is symmetric. In general, let $d_i=(\al_i,\al_i)/2$. Then the matrix
$\bar C={\rm diag}(d_1,\dots,d_r)C$ is the symmetrized Cartan matrix. There are two natural forms: one is the Killing form
($(x,y)_K={\rm tr}_{\fg} {\rm ad}x{\rm ad}y$) and the standard form defined by the (symmetrized) Cartan matrix, i.e.
$(\al_i,\al_j)=\bar C_{i,j}$. The two forms differ by the known factor $b_\fg$, $b_\fg(\cdot,\cdot)=(\cdot,\cdot)_K$. 
For example, $b_\fg=2(r+1)$ in type $A_r$ and $b_\fg=4r-2$ in type $B_r$ (see \cite{C}, Appendix).

The Weyl group $W$ of $\fg$ is a subgroup of the automorphisms of $\fh^*_{\bR}$ (the real span of the simple roots) 
generated by the simple reflections 
$s_i=s_{\al_i}$. Explicitly, $s_i\beta=\beta-2\al_i(\al_i,\beta)/(\al_i,\al_i)$. For $w\in W$ we denote by $l(w)$ the length of the
element and by $(-1)^w$ the sign $(-1)^{l(w)}$. In particular, $l(s_i)=1$ and $(-1)^{s_i}=-1$ for all $i$. 
For an element $w\in W$ and $\beta\in\fh^*_{\bR}$
the shifted action $w*\beta$ is defined by $w*\beta=w(\beta+\rho)-\rho$. 
The fundamental Weyl alcove in $\fh^*_{\bR}$
consists of $\beta$ such that $(\beta,\al_i)>0$ for all $i=1,\dots,r$. The Weyl group acts freely and transitively 
on the set of all alcoves -- the connected components of $\fh^*_{\bR}$ with all the reflection hyperplanes $\al^\perp$, $\al\in R_+$  
removed. In particular, the number of alcoves coincides with the cardinality of $W$.  
 
The fundamental weights $\om_1,\dots,\om_r\in\fh^*$ form a basis of $\fh^*$ defined by the formula
$(\om_i,\al_j)=d_i\delta_{i,j}$. 
Let $P\subset\fh^*$ be the weight lattice, $P=\bigoplus_{i=1}^r \bZ\om_i$. We define $P_{++}\subset P_+\subset P$
as follows:
\[
P_+=\bigoplus_{i=1}^r \bZ_{\ge 0}\om_i,\quad P_{++}=\bigoplus_{i=1}^r \bZ_{>0}\om_i.
\]  

Irreducible finite-dimensional highest weight $\fg$-modules are classified by the dominant integral weights, i.e. by 
$\la\in P_+$. Given such a $\la$, we denote by $V_\la$ the corresponding irreducible representation of $\fg$. 
$V_\la$ can be decomposed into the direct sum of $\fh$ eigenspaces, $V_\la=\sum_{\mu\in \fh^*} V_\la(\mu)$.
In particular, $V_\la(\la)$ is one-dimensional (spanned by the highest weight vector). For a $\fg$-module $V$ 
with the $\fh$-eigenspaces decomposition $V=\sum_{\mu\in \fh^*} V(\mu)$ the character 
$\ch V$ is defined as the formal sum
\[
\ch V=\sum_{\mu\in \fh^*} z^\mu \dim V(\mu). 
\]  
In particular, $\ch V|_{z=1}=\dim V_\la$. 
An important feature of the characters is that $\ch (V\T W)=\ch V \ch W$.
\begin{rem}
One usually uses the notation $e^\mu$ instead of $z^\mu$. However, we will use the symbol $e$ to denote the 
Euler number, so we use $z$ in the characters instead to avoid confusions. 
\end{rem}

The Weyl character formula says that
\[
\ch V_\la=\frac{\sum_{w\in W} (-1)^wz^{w*\la}}{\prod_{\al>0} (z^{\al/2}-z^{-\al/2})} = 
\frac{\sum_{w\in W} (-1)^wz^{w(\la+\rho)}}{\prod_{\al>0} (1-z^{\al})}.
\] 
The formula implies two formulas: the product form for the denominator
\[
\sum_{w\in W} (-1)^w z^{w\rho}=\prod_{\al>0} (z^{\al/2}-z^{-\al/2})
\]
and the Weyl dimension formula 
\[
\dim V_\la=\frac{\prod_{\al>0} (\la+\rho,\al)}{\prod_{\al>0} (\rho,\al)}.
\] 

Finally, we recall the Casimir element. Let $x_1,\dots,x_{\dim\fg}$ and $x^1,\dots,x^{\dim\fg}$ be dual bases of $\fg$ with respect 
to the Killing form, i.e. $(x_i,x^j)=\delta_{i,j}$. The Casimir element $C_2$ belongs to the universal enveloping
algebra of $\fg$ and is given by the formula  $\sum_{i=1}^{\dim\fg} x_ix^i$ (the result does not depend on the choice of the bases).
The Casimir element belongs to the center of the universal enveloping algebra and hence acts as a scalar operator
in any irreducible $\fg$-module. The constant is equal to $(\la,\la+2\rho)$ for irreducible representation $V_\la$.

\subsection{Probability} 
In the main body of the paper we consider several measures on the real weight space $\fh^*_{\bR}$ (the $\bR$ span of 
the simple roots).
Let $S$ be a finite set in $\fh^*_\bR$ and let $(a_s)_{s\in S}$ be a set of nonnegative real numbers summing up
to one. We denote by $\mu_S$ the discrete measure being the sum of delta measures $\sum_{s\in S} a_s\delta_s$. 
In particular, if $f$ is a function on $\fh^*_\bR$, then $\int_{\fh^*_\bR} fd\mu_S=\sum_{s\in S} a_s f(s)$. 

Let $\mu$ be an absolutely continuous measure with density $p(x)$. Let $S(N)$, $N\ge 1$ be a sequence of finite
sets of $\fh^*_\bR$ and assume that we have attached the weights $(a_s)_{s\in S(N)}$ to each $S(N)$ . Then the measures 
$\mu_{S(N)}$ converge to $\mu$ in distribution if for any bounded continuous function 
$f:\fh^*_\bR\to\bR$ one has 
\[
\lim_{N\to\infty} \int_{\fh^*_\bR} f(x)d\mu_{S(N)}= \int_{\fh^*_\bR} f(x)d\mu. 
\]
Let $\xi(N)$ be a sequence of random variables. We say that $\xi(N)$ converges in distribution to a random variable $\xi$,
if the measures induced by $\xi(N)$ converge in distribution to the measure induced by $\xi$. We write 
$\xi(N)\to \xi$.

Let $\xi$ be a random variable taking values in $\fh^*_\bR$. The characteristic function $\varphi_{\xi}(t)$, $t\in\fh^*_\bR$ 
is defined as the expectation of 
$e^{i(t,\xi)}$ (for the fixed scalar product). Recall that if $\xi(i)$, $i=1,\dots,k$ are independent 
random variables, then 
$\varphi_{\xi(1)+\dots+\xi(k)}(t)=\prod_{i=1}^k \varphi_{\xi(i)}(t)$.
The importance of the characteristic functions comes from the fact that $\xi(N)\to \xi$ if and only if 
$\lim_{N\to\infty} \varphi_{\xi(N)}(t)=\varphi_{\xi}(t)$ for any $t$.

Let $A$ be an $r\times r$ nondegenerate matrix. Recall the normal distribution $\EuScript{N}(0,A^{-1})$ on the space $\bR^r$ with the density
\[
\frac{\sqrt{\det A}}{(2\pi)^{r/2}} \exp(-\frac{1}{2} (Ax)^*x)
\]
with respect to the standard measure $dx$ on $\bR^r$ (note that we only consider normal distributions with zero mean).
Here and below for two vectors $y=(y_1,\dots,y_r)$ and $x=(x_1,\dots,x_r)$ we write $y^*x=\sum_{i=1}^r y_ix_i$.
Recall that the characteristic function of $\EuScript{N}(0,A^{-1})$ is given by $\exp(-\frac{1}{2} (A^{-1}t)^*t)$.

\section{Weight multiplicities}\label{WM}
Let $\la_1,\dots,\la_k\in P_+$ be a finite set of dominant integral weights and let $\tau_1,\dots,\tau_k$ be a set of rational numbers.
Given a number $N$ such that $N_i=\tau_i N\in\bZ_{\ge 0}$ for all $i$ we consider the tensor product 
\[
\bV_N=\bigotimes_{i=1}^k V_{\la_i}^{\otimes N_i}.
\]  
\begin{rem}
The condition $\tau_i\in\bQ$ is not really important. One can take arbitrary real parameters $\tau_i$ and let the numbers $N_i$
grow according to the law $N_i/N\to\tau_i$.
\end{rem}
The $\fg$-module $\bV_N$ enjoys the weight decomposition $\bV_N=\sum_{\mu\in\fh^*} \bV_N(\mu)$.
The induced discrete measure is defined as
\[
\dim \bV_N^{-1}\sum_{\mu\in\fh^*} \delta_{\mu} \dim \bV_N(\mu) 
\]
(note that $\bV_N(\mu)$ is non trivial only for finite numbers of weights $\mu$). 

Our goal is to study the random variable $\xi(N)$  with the following distribution law:
\begin{equation}\label{Pxi}
P\left(\xi(N)=\frac{\mu}{\sigma N^{1/2}}\right)=\frac{\dim \bV_N(\mu)}{\dim \bV_N},
\end{equation}
where $\sigma$ is certain number depending on $\fg$, $\tau_l$, $\la_l$ to be specified below. 
We first prepare a lemma.

\begin{lem}\label{qt}
Let $(\cdot,\cdot)$ be the standard invariant bilinear nondegenerate form on $\fh^*$ and let $t$ be an
element of $\fh^*$. Then 
\begin{equation}\label{1/N}
\sum_{\mu\in\fh^*} \dim V_\la(\mu)(t,\mu)^2=b_\fg\frac{(\la,\la+2\rho)\dim V_\la}{\dim\fg}(t,t)
\end{equation}
\end{lem}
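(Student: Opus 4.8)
The plan is to recognize the left-hand side of \eqref{1/N} as a trace over $V_\la$ and then to use that, on a simple Lie algebra, invariant symmetric bilinear forms are unique up to a scalar. First I would fix, via the standard form, the element $h_t\in\fh$ dual to $t$, so that $\mu(h_t)=(t,\mu)$ for every $\mu\in\fh^*$; in particular $(h_t,h_t)=(t,t)$. Since $h_t$ acts on the weight space $V_\la(\mu)$ by the scalar $\mu(h_t)=(t,\mu)$, the element $h_t^2\in\U(\fg)$ acts there by $(t,\mu)^2$, and summing over the weights gives
\[
\sum_{\mu\in\fh^*}\dim V_\la(\mu)\,(t,\mu)^2=\Tr_{V_\la}(h_t^2).
\]

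Next I would observe that $B_\la(x,y):=\Tr_{V_\la}(xy)$ is a symmetric invariant bilinear form on $\fg$ (the identity $B_\la([x,y],z)=B_\la(x,[y,z])$ follows at once from the cyclicity of the trace). Since $\fg$ is simple, $B_\la=c_\la(\cdot,\cdot)_K$ for a constant $c_\la$. To pin down $c_\la$ I would contract with the Casimir: for dual bases $x_1,\dots,x_{\dim\fg}$ and $x^1,\dots,x^{\dim\fg}$ of $\fg$ with respect to the Killing form one has, on the one hand, $\sum_i B_\la(x_i,x^i)=\Tr_{V_\la}\big(\sum_i x_i x^i\big)=\Tr_{V_\la}(C_2)=(\la,\la+2\rho)\dim V_\la$, and on the other hand $\sum_i B_\la(x_i,x^i)=c_\la\sum_i(x_i,x^i)_K=c_\la\dim\fg$; hence $c_\la=(\la,\la+2\rho)\dim V_\la/\dim\fg$.

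Combining these two displays and using the relation $b_\fg(\cdot,\cdot)=(\cdot,\cdot)_K$ between the standard form and the Killing form, I would obtain
\[
\sum_{\mu\in\fh^*}\dim V_\la(\mu)\,(t,\mu)^2=\Tr_{V_\la}(h_t^2)=c_\la(h_t,h_t)_K=c_\la\,b_\fg\,(h_t,h_t)=b_\fg\,\frac{(\la,\la+2\rho)\dim V_\la}{\dim\fg}\,(t,t),
\]
which is \eqref{1/N}. The one step needing genuine care is the bookkeeping of the three forms involved --- the standard form (used to define $h_t$ and to evaluate $(t,\mu)$ and $(t,t)$), the Killing form (with respect to which $C_2$ and its dual bases are normalized), and the scalar $b_\fg$ linking them --- since an error there would only rescale the overall constant; the trivial case $\la=0$, where both sides vanish, offers a consistency check.
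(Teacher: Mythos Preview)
Your proposal is correct and follows essentially the same route as the paper: recognize the left-hand side as $\Tr_{V_\la}(h_t^2)$, invoke the uniqueness (up to scalar) of invariant symmetric bilinear forms on a simple Lie algebra to get $\Tr_{V_\la}(xy)=c_\la(x,y)_K$, and determine $c_\la$ by contracting against the Casimir. The only cosmetic difference is that the paper expands $t=\sum_m t_m\al_m$ and works with the basis elements $h_m$ rather than the single dual vector $h_t$; the substance is identical, and your caveat about the bookkeeping of the three forms is exactly the point the paper handles when passing between $(\cdot,\cdot)_K$ and the standard form via $b_\fg$.
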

\begin{proof}
One has the decomposition $t=\sum_{m=1}^r t_m\al_m$.
Let $h_m\in\fh$, $m=1,\dots,r$ be defined by $\mu(h_m)=(\mu,\al_m)$ for any $\mu\in\fh^*$. Then 
\begin{equation} 
\sum_{\mu\in\fh^*} \dim V_\la(\mu)(t,\mu)^2={\rm tr}_{V_\la} (\sum_{m=1}^r t_m h_m)^2.
\end{equation}

For any two elements $x,y\in\fg$ one has 
\[
{\rm tr}_{V_\la} xy=\frac{(\la,\la+2\rho)\dim V_\la}{\dim\fg}b_\fg (x,y).
\] 
In fact, the form ${\rm tr}_{V_\la}(xy)$ defines an invariant nondegenerate form on $\fh$. Therefore,
there exists a constant $u$ such that ${\rm tr}_{V_{\la}} xy=u(x,y)_K=ub_\fg(x,y)$. Now for the Casimir operator $C_2$ defined
via the Killing form one has 
${\rm tr}_{V_{\la}} C_2 = (\la,\la+2\rho)_K\dim V_\la$ (recall that $(\la,\la+2\rho)_K$ is the eigenvalue of $C_2$ on $V_\la$),
and ${\rm tr}_{\fg} C_2 =\dim\fg$, since $C_2$ acts by $1$
in the adjoint representation (see \cite{Kir}, Exercise 8.7).
Hence 
\[
u=\frac{(\la,\la+2\rho)_K\dim V_\la}{\dim\fg}=b_\fg\frac{(\la,\la+2\rho)\dim V_\la}{\dim\fg}
\]
and 
\[
{\rm tr}_{V_\la} h_ih_j=\frac{(\la,\la+2\rho)\dim V_\la}{\dim\fg}(h_i,h_j)=
b_\fg\frac{(\la,\la+2\rho)\dim V_\la}{\dim\fg}(\al_i,\al_j).
\]
We conclude that \eqref{1/N} holds true.
\end{proof}

\begin{rem}
Explicitly, $(t,t)$ equals
$(\bar Ct)^*t$, where $\bar C$ is the (symmetrized) Cartan matrix and $t$ is represented as a vector via
the decomposition into the linear combination of simple roots.
\end{rem}

In what follows we use the notation (see \eqref{Pxi}) 
\begin{equation}\label{sigma}
\sigma^2=b_\fg\frac{\sum_{l=1}^k \tau_l (\la_l,\la_l+2\rho)}{\dim\fg}.
\end{equation} 

\subsection{Simply-laced case}
We first work out the simply-laced case ($\fg$ of type ADE). The general case does not differ much and
will be considered in the subsection below.

\begin{thm}\label{TP}
The sequence of random variables $\xi(N)$ converges in distribution to the random variable $\xi$ with values
$x\in\fh^*_\bR$ with the density
\[
p_\xi(x)=\frac{\sqrt{\det C^{-1}}}{(2\pi)^{r/2}}\exp(-\frac{1}{2}(x,x)),
\]
where $x=\sum_{i=1}^r x_i\om_i$ and the integration is performed against the standard form $dx$ 
Explicitly, $\xi$ has normal distribution $\EuScript{N}(0,C)$.
\end{thm}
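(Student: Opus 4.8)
The plan is to compute the characteristic function $\varphi_{\xi(N)}(t)$ explicitly and show that it converges pointwise to the characteristic function $\exp(-\tfrac12 (C^{-1}t)^*t)$ of $\EuScript{N}(0,C)$; by the L\'evy continuity criterion recalled in Section~\ref{Notation} this gives convergence in distribution. Here I must be careful about which bilinear form is used in the pairing $(t,\xi)$ in the definition of the characteristic function: with the standard form $(\cdot,\cdot)$, the density $p_\xi$ above is exactly the Gaussian $\EuScript{N}(0,C)$ when $x$ is expanded in the $\om_i$ basis, since $(x,x)=(C^{-1}t)^*t$-type expressions match up via $(\om_i,\om_j)=(C^{-1})_{ij}$ in the simply-laced case.

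The key computation is as follows. By \eqref{Pxi},
\[
\varphi_{\xi(N)}(t)=\sum_{\mu}\frac{\dim\bV_N(\mu)}{\dim\bV_N}\exp\!\left(\frac{i(t,\mu)}{\sigma N^{1/2}}\right)
=\frac{1}{\dim\bV_N}\,\Tr_{\bV_N}\exp\!\left(\frac{i}{\sigma N^{1/2}}\sum_{m=1}^r t_m h_m\right),
\]
where $t=\sum_m t_m\al_m$ and $h_m\in\fh$ is defined as in Lemma~\ref{qt}. Since $\bV_N=\bigotimes_i V_{\la_i}^{\otimes N_i}$ and the $h_m$ act diagonally on each tensor factor, the trace factorizes:
\[
\varphi_{\xi(N)}(t)=\prod_{l=1}^k\left(\frac{1}{\dim V_{\la_l}}\Tr_{V_{\la_l}}\exp\!\Big(\frac{i}{\sigma N^{1/2}}\textstyle\sum_m t_m h_m\Big)\right)^{N_l}.
\]
Now I would expand each inner factor to second order in $N^{-1/2}$. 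Writing $X=\sum_m t_m h_m$, the inner factor is $1+\frac{i}{\sigma N^{1/2}}\frac{\Tr_{V_{\la_l}}X}{\dim V_{\la_l}}-\frac{1}{2\sigma^2 N}\frac{\Tr_{V_{\la_l}}X^2}{\dim V_{\la_l}}+O(N^{-3/2})$. The linear term vanishes because $\Tr_{V_{\la_l}}X=0$: the trace of any $\fh$-element on a finite-dimensional module is zero, as weights come in Weyl-symmetric (in particular sign-symmetric under $s_\al$) families — or more directly, $\Tr_{V_{\la_l}} h_m = \sum_\mu \dim V_{\la_l}(\mu)(\mu,\al_m)=0$ since the weight diagram is invariant under $-1$ composed with... more carefully, it is $W$-invariant and $\sum_{w}w=0$ on the reflection representation. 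The quadratic term is handled by Lemma~\ref{qt}: $\Tr_{V_{\la_l}}X^2=\sum_\mu \dim V_{\la_l}(\mu)(t,\mu)^2=b_\fg\frac{(\la_l,\la_l+2\rho)\dim V_{\la_l}}{\dim\fg}(t,t)$.

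Substituting and using $N_l=\tau_l N$, the inner factor equals $1-\frac{(t,t)}{2N}\cdot b_\fg\frac{(\la_l,\la_l+2\rho)}{\sigma^2\dim\fg}+O(N^{-3/2})$, so
\[
\varphi_{\xi(N)}(t)=\prod_{l=1}^k\left(1-\frac{\tau_l\, b_\fg(\la_l,\la_l+2\rho)}{2\sigma^2\dim\fg}\cdot\frac{(t,t)}{N}+O(N^{-3/2})\right)^{\tau_l N}\longrightarrow \exp\!\left(-\frac{(t,t)}{2}\cdot\frac{b_\fg\sum_l\tau_l(\la_l,\la_l+2\rho)}{\sigma^2\dim\fg}\right),
\]
and by the definition \eqref{sigma} of $\sigma^2$ the bracketed constant is exactly $1$, giving $\varphi_{\xi(N)}(t)\to\exp(-\tfrac12(t,t))$, the characteristic function of $\EuScript{N}(0,C)$ in the simply-laced case. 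I expect the main obstacle to be purely bookkeeping rather than conceptual: making the $O(N^{-3/2})$ remainder estimate uniform enough to pass to the limit inside the $N$-th power (standard, since each inner factor is an entire function of $t$ and bounded on compacts), and correctly matching the quadratic form $(t,t)$ in the simple-root coordinates with the stated density $p_\xi(x)$ in the fundamental-weight coordinates $x=\sum x_i\om_i$ — i.e. verifying that $\EuScript{N}(0,C)$ written in the $\om_i$ basis indeed has density proportional to $\exp(-\tfrac12(x,x))$ with $(x,x)=(C^{-1}x)^*x$, which follows from $(\om_i,\om_j)=(C^{-1})_{ij}$ and $\det C^{-1}>0$.
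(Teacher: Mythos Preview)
Your proposal is correct and follows essentially the same route as the paper: factorize the characteristic function through $\ch\bV_N=\prod_l(\ch V_{\la_l})^{N_l}$, expand each factor to second order in $N^{-1/2}$, kill the linear term by $W$-invariance of the weight multiset, evaluate the quadratic term via Lemma~\ref{qt}, and take the $(1-a/N)^{\tau_l N}\to e^{-\tau_l a}$ limit. One small slip: in your displayed product the factor $\tau_l$ should appear only in the exponent $\tau_l N$, not also inside the parenthesis (your limit on the right is nonetheless correct).
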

\begin{proof}
It suffices to show that $\lim_{N\to\infty} \varphi_{\xi(N)}(t)=\exp(-\frac{1}{2} (Ct)^*t)$ for any $t$.
In fact, let $z=e^{it/\sqrt{N}}$ and for $\mu\in \fh^*$ let 
\[
z^\mu=\exp({\frac{i(t,\mu)}{\sigma\sqrt{N}}}).
\]
Then by definition 
\[
\varphi_{\xi(N)}(t)=(\dim \bV_N)^{-1}\ch \bV_N=\prod_{l=1}^k \left(\frac{\ch V_{\la_l}}{\dim V_{\la_l}}\right)^{N_l}.  
\]
Expanding each exponent $\exp({\frac{i(t,\mu)}{\sigma\sqrt{N}}})$ in $N^{-1/2}$ we obtain
\begin{multline*}
\frac{\ch V_{\la_l}}{\dim V_{\la_l}}= 1 + \frac{iN^{-1/2}}{\sigma\dim V_{\la_l}}(t,\sum_{\mu\in\fh^*} \mu\dim \bV_{\la_l}(\mu)) -\\ 
\frac{N^{-1}}{2\dim V_{\la_l}\sigma^2} \sum_{\mu\in\fh^*} \dim \bV_{\la_l}(\mu)(\mu,t)^2+\cdots.
\end{multline*}
The coefficient of $N^{-1/2}$ vanishes, since for any finite-dimensional $\fg$-module $V$ one has 
\[
\sum_{\mu\in\fh^*} \mu\dim \bV(\mu)=0
\] 
(the left hand side is fixed by $W$).  Lemma \ref{qt} implies that the coefficient in front of $1/N$ is equal to 
\[
\frac{b_\fg(\la_l,\la_l+2\rho)}{\sigma^2 \dim\fg}(t,t).
\]
Using definition \eqref{sigma} and the formula 
\[
\lim_{N\to\infty} (1-\frac{1}{2N} \frac{b_\fg(\la_l,\la_l+2\rho)}{\sigma^2 \dim\fg}(t,t)+\dots)^{\tau_lN}=
\exp(-\frac{1}{2}(t,t)b_\fg\frac{\tau_l(\la_l,\la_l+2\rho)}{\sigma^2 \dim\fg}(t,t))
\]
we conclude that
\[
\lim_{N\to \infty}\varphi_{\xi(N)}(t)=\exp(-\frac{1}{2}(t,t)).
\]  
Finally, we note that in the simply laced case the matrix $C^{-1}$ is the Gram matrix of the scalar products $\om_i,\om_j$.
We conclude that the density of the limiting random variable $\xi$ is equal to 
\[
\frac{\sqrt{\det C^{-1}}}{(2\pi)^{r/2}}\exp(-\frac{1}{2}(x,x)),
\] 
where $x$ is written as a linear combination of fundamental weights $x=\sum_{i=1}^r x_i\om_i$.
\end{proof}

\subsection{The non simply-laced case}
Let $\bar C=DC$ be the symmetrized Cartan matrix, $D={\rm diag}(d_1,\dots,d_r)$. Then the standard invariant form is defined by
$(\al_i,\al_j)=(\bar C)_{i,j}$. In particular, $d_i=(\al_i,\al_i)/2$. The fundamental weights are expressed via the simple
roots by the matrix $(C^t)^{-1}$, i.e. $\om_i=\sum_{j=1}^r (C^t)^{-1}_{i,j}\al_j$. We conclude that the Gram matrix for
the fundamental weights is given by the matrix $(C^t)^{-1}D$. The inverse to this matrix is $D^{-1}C^t$. One easily sees
that this is the Gram matrix of the system of vectors $\al_i d_i^{-1}$. We arrive at the following proposition
(analogue of Theorem \ref{TP}).
\begin{prop}
Theorem \ref{TP} holds true in general if one replaces $C$ with $\bar C$ and assume that the standard invariant form is
defined by the symmetrized Cartan matrix.
\end{prop}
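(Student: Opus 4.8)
The plan is to reduce the non simply-laced case directly to the computation already performed in the proof of Theorem \ref{TP}, observing that nothing in that argument used the simply-laced hypothesis except for the final identification of the covariance matrix. Indeed, Lemma \ref{qt} and the vanishing of $\sum_\mu \mu\dim V(\mu)$ hold for an arbitrary simple $\fg$ with the standard invariant form $(\cdot,\cdot)$ defined by $(\al_i,\al_j)=\bar C_{i,j}$, so the same expansion of the character in powers of $N^{-1/2}$ goes through verbatim. With $\sigma^2$ still given by \eqref{sigma}, the coefficient of $1/N$ in $\ch V_{\la_l}/\dim V_{\la_l}$ is again $\frac{b_\fg(\la_l,\la_l+2\rho)}{\sigma^2\dim\fg}(t,t)$, and taking the product over $l$ and the limit $N\to\infty$ yields $\lim_{N\to\infty}\varphi_{\xi(N)}(t)=\exp(-\tfrac12(t,t))$ exactly as before. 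So the only thing that changes is the translation of the quadratic form $(t,t)$ and of the target density into coordinates.

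The remaining point is therefore purely linear-algebraic: to write the limiting Gaussian in the $x_i$-coordinates coming from the expansion $x=\sum_i x_i\om_i$. By the invariance identity $\varphi_\xi(t)=\exp(-\tfrac12(t,t))$, the random variable $\xi$ has, by the formulas recalled in the Probability subsection, characteristic function $\exp(-\tfrac12(A^{-1}t)^*t)$ where $A^{-1}$ is the Gram matrix of the simple roots in the chosen bilinear form, i.e. $A^{-1}=\bar C$, so $A=\bar C^{-1}$ and $\xi\sim\EuScript N(0,\bar C^{-1})$ when $\xi$ is coordinatized by its root-coefficients. Passing to the fundamental-weight coordinates $x_i$ amounts to the change of basis $\om_i=\sum_j (C^t)^{-1}_{i,j}\al_j$ already recorded in the text: the Gram matrix of the $\om_i$ is $(C^t)^{-1}D$, hence the density of $\xi$ against $dx=dx_1\cdots dx_r$ is
\[
p_\xi(x)=\frac{\sqrt{\det\bar C}}{(2\pi)^{r/2}}\exp\!\left(-\tfrac12(x,x)\right),\qquad (x,x)=(\bar Cx)^*x,
\]
which is precisely the statement of Theorem \ref{TP} with $C$ replaced by $\bar C$. (One uses $\det\bar C=\det((C^t)^{-1}D)^{-1}$, i.e. that the normalization constant is the square root of the determinant of the inverse Gram matrix of the $\om_i$, matching the form $\sqrt{\det C^{-1}}$ in the simply-laced case where $\bar C=C$ and $C$ is symmetric.)

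Concretely I would organize the write-up in two steps. First, remark that the proof of Theorem \ref{TP} only invoked the simply-laced assumption in its last sentence — everything up to $\lim_N\varphi_{\xi(N)}(t)=\exp(-\tfrac12(t,t))$ is stated for general simple $\fg$ via Lemma \ref{qt} — so that limit holds as is. Second, substitute the bookkeeping from the paragraph preceding the proposition: the Gram matrix of the $\om_i$ is $(C^t)^{-1}D$ with inverse $D^{-1}C^t$, and this inverse is the Gram matrix of the rescaled roots $d_i^{-1}\al_i$; therefore writing $x=\sum_i x_i\om_i$ gives $(x,x)=(\bar Cx)^*x$ up to the transpose, which is harmless since $\bar C$ is symmetric, and the Gaussian normalization is $\sqrt{\det\bar C}/(2\pi)^{r/2}$. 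I do not expect a genuine obstacle here; the only mild subtlety is keeping the transposes straight between the $C$-action on roots and the $C^t$-action on weights, and checking that $\det\bar C^{-1}$ is indeed the determinant of the weight Gram matrix so that the constant reads $\sqrt{\det\bar C^{-1}}$ in exact parallel with the simply-laced formula.
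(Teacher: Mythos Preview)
Your strategy is exactly the paper's: observe that the characteristic-function argument in Theorem \ref{TP} nowhere used the simply-laced hypothesis, so $\lim_N\varphi_{\xi(N)}(t)=\exp(-\tfrac12(t,t))$ holds in general, and then do the linear algebra to pass to fundamental-weight coordinates. The paper's proof is the one-sentence version of your second step: write $t$ in the basis $\{\al_i d_i^{-1}\}$ (which is dual to $\{\om_i\}$), note that the matrix of $(t,t)$ in these coordinates is $D^{-1}C^t$, and conclude that the covariance matrix in the $x$-coordinates is its inverse, namely the Gram matrix $(C^t)^{-1}D$ of the $\om_i$.

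However, your execution of the linear algebra contains a genuine slip. With $x=\sum_i x_i\om_i$ one has $(x,x)=x^t\bigl((C^t)^{-1}D\bigr)x$, since $(C^t)^{-1}D$ is the Gram matrix of the fundamental weights; it is \emph{not} $(\bar Cx)^*x$, which would be the expression in simple-root coordinates. Likewise, your parenthetical check ``$\det\bar C=\det((C^t)^{-1}D)^{-1}$'' is false in general: $\det\bar C=\det D\cdot\det C$ while $\det((C^t)^{-1}D)=\det D/\det C$, and these differ by $(\det D)^2$. For instance in type $B_2$ (with $d_1=1$, $d_2=1/2$) one gets $\det\bar C=1$ but the Gram matrix of the $\om_i$ has determinant $1/4$, matching the normalization $\sqrt{1/4}$ in the paper's $B_2$ example. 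So the constant you should obtain is $\sqrt{\det((C^t)^{-1}D)}$, not $\sqrt{\det\bar C}$ or $\sqrt{\det\bar C^{-1}}$; the paper's ``replace $C$ by $\bar C$'' is meant loosely, with the precise content spelled out in the paragraph preceding the proposition and in its proof. Fixing these identifications, your argument is correct and coincides with the paper's.
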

\begin{proof}
One has to write vector $t\in\fh^*_\bR$ from the proof of Theorem \ref{TP} in basis $\al_i d_i^{-1}$. Then the matrix 
responsible for $(t,t)$ is $D^{-1}C^t$ and the inverse matrix is exactly the Gram matrix for the system of fundamental weights.
\end{proof}

\section{Decomposition into irreducibles}\label{DiI}
We now consider another measure on $\fh^*$ coming form the decomposition of the tensor product representation
into irreducible components. Recall $\bV=\bigotimes_{l=1}^k V_{\la_l}^{N_l}$, where $N_l=\tau_lN$. One has
\[
\bV_N=\bigoplus_{\mu\in P_+} V_\mu \otimes M_\mu,
\]
where $M_\mu$ is the space of multiplicities, spanned by the highest weight vectors of all the submodules
$V_\mu$ inside $\bV_N$. In particular, $\dim M_\mu=[\bV_N:V_\mu]$ (the number of times $V_\mu$ show up in $\bV_N$)
and $\dim \bV_N=\sum_{\mu\in P_+} \dim M_\mu\dim V_\mu$. We consider a random variable $\eta(N)$ distributed 
according to the following formula 
\begin{equation}\label{eta}
P\left(\eta(N)=\frac{\mu}{\sigma\sqrt{N}}\right)=\frac{[\bV_N:V_\mu]\dim V_\mu}{\dim \bV_N}.
\end{equation}
We note that $\eta(N)$ takes values only in $\fh^*_+=\sum_{j=1}^r \bR_{\ge 0}\om_j$, i.e. in the 
closure of the fundamental alcove. Our goal in this section
is to study the behaviour of the random variables $\eta(N)$ when $N$ tends to infinity. To this end 
we extend $\eta(N)$ to another random variable $\eta^e(N)$ taking values in the whole $\fh^*_\bR$ by
the following rule. Recall the shifted action of the Weyl group $W$ on $\fh^*$: $w*\beta=w(\beta+\rho)-\rho$.
The group acts transitively on the shifted alcoves and the shifted walls $L_\al-\rho$, where 
$L_\al=\{\beta\in\fh^*_\bR: (\beta,\al)=0\}$ 
labeled by positive roots $\al\in R_+$ form the complement of the union of the alcoves.
We define $\eta^e(N)$ by the following formula: 
\begin{equation}\label{etae}
P\left(\eta^e(N)=\frac{\mu}{\sigma\sqrt{N}}\right)=
\begin{cases}
0, & \mu\in \bigcup_{\al\in R_+} (L_\al-\rho),\\
|W|^{-1}P\left(\eta(N)=\frac{w*\mu}{\sigma\sqrt{N}}\right), & w*\mu\in \fh^*_+. 
\end{cases}
\end{equation}

\begin{example}
Let $\fg=\msl_2$. Then $r=1$, $W=\{e,s\}=S_2$, $L_\al=0$, $P=\bZ$ and $\mu\in P$ can be identified with an integer  number 
(the ration of $\mu$ and $\omega_1$).
Then $s*\mu=-\mu-2$ and $-1$ is fixed by $s$. So definition \eqref{etae} says that 
\[
P\left(\eta^e(N)=\frac{\mu}{\sigma\sqrt{N}}\right)=
\begin{cases}
\frac{1}{2}P\left(\eta(N)=\frac{\mu}{\sigma\sqrt{N}}\right), & \mu\ge 0,\\ 
0, & \mu=-1,\\
\frac{1}{2}P\left(\eta(N)=\frac{-\mu-2}{\sigma\sqrt{N}}\right), & \mu\le -2. 
\end{cases}
\]
\end{example}
We note that the $W$ orbit (with respect to the shifted action) of any element of $P_+$ consists of $W$ elements. 
Hence the definition \eqref{etae} does make sense
(all the probabilities over all $\mu$ sum up to one). 

\begin{rem}
We will compute the density $p_{\eta^e}(x)$, $x\in\fh^*_\bR$ of the limit of the random variables $\eta^e(N)$. 
Then clearly restricting to $\fh^*_+$ one obtains the density 
of the limit of the initial random variables $\eta(N)$. 
\end{rem} 

Let $D$ be an operator on the linear span of the expressions $z^\mu$, $\mu\in\fh^*$ defined by
\begin{equation}
Dz^\mu=z^{\mu-\rho}\frac{\prod_{\al>0} (\mu,\al)}{\prod_{\al>0} (\rho,\al)}.
\end{equation}

\begin{lem}\label{D}
Let $\mu\in P_{+}$, $w\in W$. Then 
$Dz^{w(\mu+\rho)}=(-1)^w z^{w*\mu} \dim V_{\mu}$.
\end{lem}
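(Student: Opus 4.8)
The plan is to prove the identity $Dz^{w(\mu+\rho)}=(-1)^w z^{w*\mu}\dim V_\mu$ by a direct computation using the definition of $D$ together with the Weyl dimension formula. First I would apply the definition of $D$ to the monomial $z^{w(\mu+\rho)}$: writing $\nu=w(\mu+\rho)$, we get
\[
Dz^{\nu}=z^{\nu-\rho}\frac{\prod_{\al>0}(\nu,\al)}{\prod_{\al>0}(\rho,\al)}
=z^{w(\mu+\rho)-\rho}\frac{\prod_{\al>0}(w(\mu+\rho),\al)}{\prod_{\al>0}(\rho,\al)}.
\]
The exponent $w(\mu+\rho)-\rho$ is by definition exactly $w*\mu$, so the exponential factor is already in the desired form; it remains to identify the scalar coefficient.

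The key step is to analyze the numerator $\prod_{\al>0}(w(\mu+\rho),\al)$. Since the form $(\cdot,\cdot)$ is $W$-invariant, $(w(\mu+\rho),\al)=(\mu+\rho,w^{-1}\al)$, so the product becomes $\prod_{\al>0}(\mu+\rho,w^{-1}\al)$. As $\al$ ranges over $R_+$, the element $w^{-1}\al$ ranges over the set $w^{-1}R_+$, which consists of all positive roots except for the $l(w^{-1})=l(w)$ roots that $w^{-1}$ sends to negative roots; equivalently, $w^{-1}R_+=(R_+\setminus S)\cup(-S)$ for a set $S\subset R_+$ with $|S|=l(w)$. Hence
\[
\prod_{\al>0}(\mu+\rho,w^{-1}\al)=(-1)^{l(w)}\prod_{\be>0}(\mu+\rho,\be)=(-1)^w\prod_{\be>0}(\mu+\rho,\be),
\]
where the sign $(-1)^{l(w)}=(-1)^w$ collects the contribution of the $l(w)$ roots in $-S$. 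Dividing by $\prod_{\al>0}(\rho,\al)$ and invoking the Weyl dimension formula $\dim V_\mu=\prod_{\al>0}(\mu+\rho,\al)/\prod_{\al>0}(\rho,\al)$ yields the coefficient $(-1)^w\dim V_\mu$, completing the proof.

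The only genuinely substantive point is the combinatorial fact that $w^{-1}$ permutes $R_+$ up to exactly $l(w)$ sign changes — this is the standard characterization of the length function ($l(w)=|R_+\cap w^{-1}R_-|$) — and the observation that since $\mu\in P_+$ the product $\prod_{\be>0}(\mu+\rho,\be)$ is a product of strictly positive numbers, so no factor vanishes and the sign is unambiguous. I expect this to be the main (and essentially only) obstacle, and it is routine Lie theory; the remaining manipulations are purely formal substitutions into the definition of $D$ and the Weyl dimension formula. No case distinctions by type are needed, since the argument uses only $W$-invariance of the form and the length characterization, both valid in general.
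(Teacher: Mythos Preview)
Your proof is correct and follows essentially the same route as the paper's own argument: apply the definition of $D$, use $W$-invariance of the form to rewrite $\prod_{\al>0}(w(\mu+\rho),\al)$ as $\prod_{\al>0}(\mu+\rho,w^{-1}\al)$, invoke the length characterization $l(w)=|R_+\cap w^{-1}R_-|$ for the sign, and finish with the Weyl dimension formula. Your write-up is in fact slightly more careful than the paper's (you correctly use $w^{-1}\al$ rather than $w\al$, though since $l(w)=l(w^{-1})$ this makes no difference to the conclusion).
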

\begin{proof}
We first note that by the Weyl dimension formula  $Dz^{\mu+\rho}=z^\mu\dim V_\mu$ for $\mu\in P_+$. 

By definition, $Dz^{w(\mu+\rho)}$ is equal to 
\[
z^{w*\mu}\prod_{\al>0} (w(\mu+\rho),\al)=z^{w*\mu}\prod_{\al>0}(\mu+\rho,w\al)=(-1)^wz^{w*\mu}\dim V_\mu,
\]
where the first equality holds since $W$ preserves the scalar product and the second equality holds, since $w$ sends exactly 
$l(w)$ positive roots to negative.
\end{proof}

\begin{prop}\label{DW}
Let $V$ be a finite-dimensional (not necessarily irreducible) representation of $\fg$. Then 
\[
D\left(\ch V \sum_{w\in W} (-1)^w z^{w\rho}\right)=
\sum_{\mu\in P_+}\sum_{w\in W} z^{w*\mu} [V:V_\mu]\dim V_\mu. 
\]  
\end{prop}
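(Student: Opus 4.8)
The plan is to combine the Weyl character formula, which expresses $\ch V$ as a ratio, with Lemma~\ref{D}, which tells us exactly how the operator $D$ acts on Weyl-group translates $z^{w(\mu+\rho)}$. First I would decompose $V$ into irreducibles, writing $\ch V = \sum_{\mu\in P_+} [V:V_\mu]\,\ch V_\mu$. Then, using the Weyl character formula in the form $\ch V_\mu \cdot \prod_{\al>0}(z^{\al/2}-z^{-\al/2}) = \sum_{w\in W}(-1)^w z^{w(\mu+\rho)}$, together with the denominator identity $\sum_{w\in W}(-1)^w z^{w\rho} = \prod_{\al>0}(z^{\al/2}-z^{-\al/2})$, I get
\[
\ch V \cdot \sum_{w\in W}(-1)^w z^{w\rho} \;=\; \sum_{\mu\in P_+}[V:V_\mu]\sum_{w\in W}(-1)^w z^{w(\mu+\rho)}.
\]

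Now apply $D$ to both sides. Since $D$ is linear, it suffices to apply it term by term on the right-hand side. By Lemma~\ref{D}, for each $\mu\in P_+$ and each $w\in W$ we have $Dz^{w(\mu+\rho)} = (-1)^w z^{w*\mu}\dim V_\mu$, so $(-1)^w Dz^{w(\mu+\rho)} = (-1)^{2l(w)} z^{w*\mu}\dim V_\mu = z^{w*\mu}\dim V_\mu$. Summing over $w\in W$ and $\mu\in P_+$ gives precisely
\[
D\left(\ch V \sum_{w\in W}(-1)^w z^{w\rho}\right) = \sum_{\mu\in P_+}\sum_{w\in W} z^{w*\mu}[V:V_\mu]\dim V_\mu,
\]
which is the claimed identity.

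\textbf{Remarks on the obstacles.} There is no serious obstacle here; the statement is essentially a bookkeeping consequence of the two preceding results. The one point that deserves a word of care is convergence/finiteness: $D$ is defined on the linear span of the symbols $z^\mu$, and $\ch V$ is a finite sum since $V$ is finite-dimensional, while $\sum_{w\in W}(-1)^w z^{w\rho}$ is also a finite sum (the product $\prod_{\al>0}(z^{\al/2}-z^{-\al/2})$ is a polynomial in the $z^{\pm\al/2}$), so the product on which $D$ acts lies in the correct domain and every manipulation is a finite rearrangement. I would also note that the sum over $\mu\in P_+$ is effectively finite, as $[V:V_\mu]=0$ for all but finitely many $\mu$. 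With these finiteness observations in place the argument is just: decompose, multiply by the denominator using the Weyl character and denominator formulas, then apply $D$ termwise via Lemma~\ref{D} and cancel the signs.
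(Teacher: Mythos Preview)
Your proof is correct and follows essentially the same approach as the paper: decompose $V$ into irreducibles, use the Weyl character formula (together with the denominator identity) to rewrite $\ch V_\mu\sum_{w}(-1)^wz^{w\rho}$ as $\sum_w(-1)^w z^{w(\mu+\rho)}$, and then apply Lemma~\ref{D} termwise. The only cosmetic difference is the order of presentation --- the paper first treats the irreducible case and then invokes linearity, whereas you decompose first --- but the content is identical.
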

\begin{proof}
We start with the case of irreducible $V$. Then $V=V_\la$ for some $\la\in P_+$ and from the Weyl character formula
we obtain that 
\[
\ch V_\la\sum_{w\in W} (-1)^w z^{w\rho}=\sum_{w\in W} (-1)^wz^{w(\la+\rho)}.
\]
Applying $D$ and using Lemma \ref{D} we obtain the desired result. Now for arbitrary $\fg$-module one has the decomposition
$V=\bigoplus_{\la\in P_+} [V:V_\la]V_\la$. Since $D$ is $\bC$-linear, we obtain the desired formula.  
\end{proof}

\begin{thm}\label{TPD}
The random variables $\eta^e(N)$ converge in distribution to a random variable $\eta^e$ with values in $\fh^*$ 
distributed with the density
\[
p_{\eta^e}(x)=|W|^{-1}\frac{\sqrt{\det C^{-1}}}{(2\pi)^{r/2}}\frac{\prod_{\al>0} (x,\al)^2}{\prod_{\al>0}(\rho,\al)}\exp(-\frac{1}{2} (x,x)),
\]
where $x\in\fh^*_\bR$, the integration is performed with respect to the standard form $dx$ and $x$ is identified with 
a vector in $\bR^r$ by writing it as a linear combination of fundamental weights.
\end{thm}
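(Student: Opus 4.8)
The plan is to compute the characteristic function $\varphi_{\eta^e(N)}$, put it into closed form by Proposition \ref{DW}, realize the operator $D$ as differentiation in the frequency variable, and pass to the limit. First unfold \eqref{etae}: for $\la\in P_+$ the weight $\la+\rho$ is regular, so the shifted orbit $\{w*\la:w\in W\}$ has $|W|$ distinct points, each carrying mass $|W|^{-1}P(\eta(N)=\la/(\sigma\sqrt N))$. Writing $z^\mu=\exp\!\big(i(u,\mu)/(\sigma\sqrt N)\big)$ and using Proposition \ref{DW} with the Weyl denominator identity,
\[
\varphi_{\eta^e(N)}(u)=\frac{1}{|W|\dim\bV_N}\sum_{\la\in P_+}[\bV_N:V_\la]\dim V_\la\sum_{w\in W}z^{w*\la}
=\frac{1}{|W|\dim\bV_N}\,D\Big(\ch\bV_N\cdot\prod_{\al>0}(z^{\al/2}-z^{-\al/2})\Big).
\]
Expanding the bracket in monomials $z^\nu$, the rule $Dz^\nu=z^{\nu-\rho}\prod_{\al>0}(\nu,\al)/\prod_{\al>0}(\rho,\al)$ becomes, after substituting $z^\nu=e^{i(u,\nu)/(\sigma\sqrt N)}$, ``apply $\prod_{\al>0}\frac{\sigma\sqrt N}{i}\,\partial_{u,\al}$, then multiply by $e^{-i(u,\rho)/(\sigma\sqrt N)}$, then divide by $\prod_{\al>0}(\rho,\al)$'', where $\partial_{u,\al}$ is the directional derivative with $\partial_{u,\al}(u,\beta)=(\beta,\al)$. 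Setting $g_N(u):=\big(\ch\bV_N/\dim\bV_N\big)\big|_{z^\mu=e^{i(u,\mu)/(\sigma\sqrt N)}}$ and absorbing $(\sigma\sqrt N/i)^{|R_+|}$ into the bracket, this yields
\[
\varphi_{\eta^e(N)}(u)=\frac{e^{-i(u,\rho)/(\sigma\sqrt N)}}{|W|\prod_{\al>0}(\rho,\al)}\ \prod_{\al>0}\partial_{u,\al}\Big[g_N(u)\prod_{\al>0}2\sigma\sqrt N\,\sin\!\Big(\tfrac{(u,\al)}{2\sigma\sqrt N}\Big)\Big].
\]

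For the limit I would invoke the computation behind Theorem \ref{TP}: $g_N(u)\to e^{-(u,u)/2}$. Since $D$ has become a differential operator of order $|R_+|$, I need this convergence together with all $u$-derivatives up to that order; this holds because all the functions involved are entire in $u\in\bC^r$ and the convergence is locally uniform, so Cauchy's estimates give locally uniform convergence of every derivative. As also $2\sigma\sqrt N\sin((u,\al)/(2\sigma\sqrt N))\to(u,\al)$ locally uniformly with derivatives and $e^{-i(u,\rho)/(\sigma\sqrt N)}\to 1$, one obtains
\[
\varphi_{\eta^e(N)}(u)\ \longrightarrow\ \varphi_{\eta^e}(u):=\frac{1}{|W|\prod_{\al>0}(\rho,\al)}\ \prod_{\al>0}\partial_{u,\al}\Big[e^{-(u,u)/2}\prod_{\al>0}(u,\al)\Big].
\]

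It remains to recognise $\varphi_{\eta^e}$ as the characteristic function of the density in the statement, and here the key input is the classical fact that $\prod_{\al>0}(\cdot,\al)$ is harmonic for the Laplacian $\Delta$ attached to the form $(\cdot,\cdot)$ (it is the $W$-anti-invariant of minimal degree $|R_+|$ and $\Delta$ is $W$-invariant, so $\Delta^{p}\prod_{\al>0}(\cdot,\al)$ is anti-invariant of degree $<|R_+|$, hence $0$, for all $p\ge 1$). Expanding $\prod_{\al>0}\partial_{u,\al}\,e^{-(u,u)/2}$ by Leibniz, every term carrying a contraction $(\al,\beta)$ is a multiple of some $\Delta^{p}\prod_{\al>0}(u,\al)=0$, so $\prod_{\al>0}\partial_{u,\al}\,e^{-(u,u)/2}=(-1)^{|R_+|}\prod_{\al>0}(u,\al)\,e^{-(u,u)/2}$; substituting this rewrites $\varphi_{\eta^e}$ as $\tfrac{(-1)^{|R_+|}}{|W|\prod_{\al>0}(\rho,\al)}\prod_{\al>0}\partial_{u,\al}^2\,e^{-(u,u)/2}$. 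Since by Theorem \ref{TP} $e^{-(u,u)/2}$ is the characteristic function of $\xi$ with density $p_\xi$, the elementary identity $\prod_{\al>0}\partial_{u,\al}^2\,e^{-(u,u)/2}=(-1)^{|R_+|}\int_{\fh^*_\bR}e^{i(u,x)}\prod_{\al>0}(x,\al)^2\,p_\xi(x)\,dx$ gives
\[
\varphi_{\eta^e}(u)=\int_{\fh^*_\bR}e^{i(u,x)}\,\frac{1}{|W|\prod_{\al>0}(\rho,\al)}\prod_{\al>0}(x,\al)^2\,p_\xi(x)\,dx,
\]
the characteristic function of the random variable with density $p_{\eta^e}$; the continuity theorem then yields $\eta^e(N)\to\eta^e$. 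The two steps that require care are this harmonicity argument — it is precisely what promotes the single factor $\prod_{\al>0}(x,\al)$ produced by the Weyl dimension formula inside $D$ to the square appearing in $p_{\eta^e}$ — and the upgrade of the convergence in Theorem \ref{TP} so that it controls $u$-derivatives up to order $|R_+|$.
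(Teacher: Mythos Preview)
Your proof is correct and takes a genuinely different route from the paper's. Both start from Proposition~\ref{DW}, but they diverge in how the second factor $\prod_{\al>0}(x,\al)$ is produced. The paper works on the density side: it rewrites the left-hand side of \eqref{DVN} as $\sum_\nu z^{\nu-\rho}\prod_{\al>0}(\nu,\al)\sum_{w}(-1)^w c_{\nu-w\rho}$, replaces $c_{\nu-w\rho}$ by the Gaussian coming from Theorem~\ref{TP}, and then applies the Weyl denominator identity $\sum_w(-1)^w z^{w\rho}=\prod_{\al>0}(z^{\al/2}-z^{-\al/2})$ with $z^\beta=\exp(-(\nu,\beta)/(N\sigma^2))$ to turn the alternating sum over $W$ into a product whose leading term is $(\sigma\sqrt N)^{-|R_+|}\prod_{\al>0}(x,\al)$. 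You instead stay on the characteristic-function side, realize $D$ as $\prod_{\al>0}\partial_{u,\al}$, and use the harmonicity of $\Pi(u)=\prod_{\al>0}(u,\al)$ to show $\Pi(\partial)e^{-(u,u)/2}=(-1)^{|R_+|}\Pi(u)e^{-(u,u)/2}$, which converts $\Pi(\partial)[\Pi(u)e^{-(u,u)/2}]$ into the Fourier transform of $\Pi(x)^2 p_\xi(x)$. The paper's argument keeps everything within the standard Lie-theoretic toolkit (only the denominator identity is used), while yours imports the Harish-Chandra fact that $\Pi$ is harmonic; on the other hand, your analytic step (locally uniform convergence of entire functions $\Rightarrow$ convergence of derivatives) is cleaner than the paper's implicit local-limit replacement of $c_{\nu-w\rho}$ by the Gaussian density. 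One small wording issue: it is not each individual contraction term that is a multiple of $\Delta^{p}\Pi$, but the \emph{sum} of all terms with exactly $p$ contractions---this is exactly the identity $p(\partial)e^{-(u,u)/2}=(-1)^{\deg p}(e^{-\Delta/2}p)(u)\,e^{-(u,u)/2}$ specialized to the harmonic polynomial $p=\Pi$, so your conclusion stands.
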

\begin{proof}
Proposition \ref{DW} applied to $V=\bV_N$ implies
\begin{equation}\label{DVN}
\frac{1}{\dim \bV_N}D\left(\ch \bV_N\sum_{w\in W} (-1)^w z^{w\rho}\right)=
\sum_{\substack{\mu\in P_+\\ w\in W}} z^{w*\mu} \frac{[V:V_\mu]\dim V_\mu}{\dim \bV_N}. 
\end{equation}
From definitions \eqref{eta} and \eqref{etae} we see that in order to study the limit of the random variables
$\eta^e(N)$ one needs to understand the behaviour of the left hand side of \eqref{DVN} in the limit $N\to\infty$. 

Let us define numbers $c_\mu$, $\mu\in P$ by the formula
\[
\frac{\ch \bV_N}{\dim \bV_N}=\sum_{\mu\in P} c_\mu z^\mu.
\]
Applying the operator $D$ to $\ch \bV_N (\dim \bV_N)^{-1}\sum_{w\in W} (-1)^w z^{w\rho}$, one gets
\begin{multline*}
\sum_{\substack{\mu\in P\\ w\in W}} (-1)^w c_\mu Dz^{\mu+w\rho}=
\sum_{\substack{\mu\in P\\ w\in W}} (-1)^w c_\mu z^{\mu+w\rho-\rho}\prod_{\al>0} (\mu+w\rho,\al)=\\
\sum_{\nu\in P} z^{\nu-\rho} \prod_{\al>0} (\nu,\al) \sum_{w\in W} (-1)^w c_{\nu-w\rho}.    
\end{multline*}
Let $x=N^{-1/2}\nu/\sigma$. 
Recall that the random variables $\xi(N)$ (see Theorem \ref{TP}) such that
\[
P\left(\xi(N)=\frac{\mu}{\sigma\sqrt{N}}\right)=c_\mu
\] 
converge in distribution to the random variable $\xi$ with the density
\[
p_\xi(x)=\frac{\sqrt{\det C^{-1}}}{(2\pi)^{r/2}}\exp(-(x,x)/2).
\]
Hence if we are able to find the limit
\[
\lim_{N\to\infty} \prod_{\al>0} (xN^{1/2}\sigma,\al) \sum_{w\in W} (-1)^w 
\exp\left(-\frac{(xN^{1/2}\sigma-w\rho,xN^{1/2}\sigma-w\rho)}{2N\sigma^2}\right)
\]
is equal to some $p(x)$, then we will show that the sequence $\eta^e(N)$ converges in distribution
and the density of the limit is given by $\frac{\sqrt{\det C^{-1}}}{|W|(2\pi)^{r/2}}p(x)$.

We rewrite
\begin{equation}\label{ff}
\prod_{\al>0} (\nu,\al)=(N^{1/2}\sigma)^{|R_+|}\prod_{\al>0} (x,\al).
\end{equation}
Now we analyze the factor $\sum_{w\in W} c_{\nu-w\rho}$. Replacing $c_{\nu-w\rho}$ with 
$\exp(-\frac{1}{2N\sigma^2} (\nu-w\rho,\nu-w\rho))$  (see Theorem \ref{TP}) we obtain
\begin{multline*}
\sum_{w\in W} (-1)^w \exp\left(-\frac{(\nu-w\rho,\nu-w\rho)}{2N\sigma^2}\right)=\\
\exp\left(-\frac{(\nu,\nu)}{2N\sigma^2}\right)\exp\left(-\frac{(\rho,\rho)}{2N\sigma^2}\right)
\sum_{w\in W} (-1)^w \exp\left(-\frac{(2\nu,w\rho)}{2N\sigma^2}\right).
\end{multline*}
The factor $\exp\left(-\frac{(\nu,\nu)}{2N\sigma^2}\right)$ after the substitution $x=N^{-1/2}\nu/\sigma$ turns into
$\exp(-\frac{1}{2}(x,x))$. The factor $\exp\left(-\frac{(\rho,\rho)}{2N\sigma^2}\right)$ tends to $1$ when $N\to\infty$.
The most complicated is the last factor 
\[
\sum_{w\in W} (-1)^w \exp\left(-\frac{(2\nu,w\rho)}{2N\sigma^2}\right).
\]

For a weight $\nu$ we define a linear functional $\pi_\nu$ on the $\bC[P]$ (the group algebra of the weight lattice $P$)
by the formula 
$\pi_\nu(z^{\beta})=\exp\left(-\frac{(2\nu,\beta)}{2N\sigma^2}\right)$. Then
\begin{multline*}
\sum_{w\in W} (-1)^w \exp\left(-\frac{(2\nu,w\rho)}{2N\sigma^2}\right)=\pi_\nu \sum_{w\in W} (-1)^w z^{w\rho}=\\
\pi_\nu \prod_{\al>0} (z^{\al/2}-z^{\al/2})=
\prod_{\al>0} \left(\exp\left(\frac{(\nu,\al)}{2N\sigma^2}\right)-\exp\left(-\frac{(\nu,\al)}{2N\sigma^2}\right)\right).
\end{multline*}
Expanding in $1/N$ we obtain the leading term
\[
(\sigma N^{1/2})^{-|R_+|}\prod_{\al>0} \left(\frac{\nu}{N^{1/2}\sigma},\al\right)=(\sigma N^{1/2})^{-|R_+|}\prod_{\al>0} (x,\al). 
\]
Multiplying by \eqref{ff}, we arrive at the desired formula.
\end{proof}

\begin{cor}
The random variables $\eta(N)$ converge in distribution to the random variable $\eta$ taking values in $\fh^*_+$
with the density given by
\[
p_\eta(x)=\frac{\sqrt{\det C^{-1}}}{(2\pi)^{r/2}}\frac{\prod_{\al>0} (x,\al)^2}{\prod_{\al>0}(\rho,\al)}\exp(-\frac{1}{2} (x,x)).
\]
\end{cor}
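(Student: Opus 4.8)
The plan is to deduce the corollary directly from Theorem~\ref{TPD} by restriction. Recall that $\eta(N)$ is obtained from $\eta^e(N)$ by ``folding'' along the shifted Weyl group action: for $\mu\in P_+$ one has $P(\eta(N)=\mu/(\sigma\sqrt N)) = |W|\cdot P(\eta^e(N)=\mu/(\sigma\sqrt N))$, since the whole $W$-orbit (under the shifted action) of a point in $P_+$ maps to it with equal weight, and all other mass of $\eta^e(N)$ sits on the reflection walls which carry zero probability. Thus the measure induced by $\eta^e(N)$, restricted to $\fh^*_+$ and multiplied by $|W|$, is exactly the measure induced by $\eta(N)$. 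Passing to the limit, the density $p_{\eta^e}(x)$ restricted to $\fh^*_+$ and multiplied by $|W|$ gives $p_\eta(x)$, which cancels the factor $|W|^{-1}$ in Theorem~\ref{TPD} and yields the stated formula.

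The one point requiring a little care is that convergence in distribution of $\eta^e(N)$ to an absolutely continuous $\eta^e$ does not automatically transfer to the restriction, because restricting to the closed cone $\fh^*_+$ can in principle lose mass at the boundary. Here, however, the limiting density $p_{\eta^e}$ vanishes on every wall $L_\al$ (the factor $\prod_{\al>0}(x,\al)^2$ is zero there), so the boundary $\partial\fh^*_+$ has $\eta^e$-measure zero, and $\fh^*_+$ is an $\eta^e$-continuity set. Concretely, for a bounded continuous $f$ supported on $\fh^*_+$, one approximates $f$ from inside by functions supported on the open cone, uses that $\eta^e(N)$ assigns zero mass to the walls and that its mass on a thin neighborhood of the walls is controlled by the limiting density (which is small there), and concludes $\int f\,d\mu_{\eta^e(N)}\to\int f\,d\mu_{\eta^e}$ with the integral over $\fh^*_+$ equal to the integral over all of $\fh^*_\bR$. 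Multiplying by $|W|$ and identifying $\fh^*_+$ with the support of $\eta(N)$ gives $\eta(N)\to\eta$ with the claimed density.

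The main (and essentially only) obstacle is the boundary issue just described; once one observes that $p_{\eta^e}$ vanishes on the reflection hyperplanes, it is routine. I would phrase the argument as: the map $\fh^*_\bR\setminus\bigcup_\al(L_\al-\rho)\to\fh^*_+$ sending $\beta$ to the unique $w*\beta\in\fh^*_+$ pushes the measure of $\eta^e(N)$ forward to $|W|$ times the measure of $\eta(N)$ (by \eqref{etae}), and pushes $\mu_{\eta^e}$ forward to $|W|$ times a measure on $\fh^*_+$ with density $|W|\,p_{\eta^e}|_{\fh^*_+}$; continuity of this pushforward at the limit follows from the vanishing of $p_{\eta^e}$ on the walls, so $\eta(N)\to\eta$ and $p_\eta(x)=|W|\,p_{\eta^e}(x)$ for $x\in\fh^*_+$, which is the asserted formula.
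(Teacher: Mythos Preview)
Your approach is essentially the same as the paper's: deduce the corollary from Theorem~\ref{TPD} by passing from $\eta^e$ back to $\eta$ via the factor $|W|$ and restriction to $\fh^*_+$. The paper's entire proof is the single sentence ``It suffices to note that $p_{\eta^e}(x)$ is $W$ invariant,'' which is the clean way to see that $|W|\,p_{\eta^e}|_{\fh^*_+}$ is a probability density (each Weyl chamber carries mass $1/|W|$). You instead justify the restriction via the continuity-set argument, observing that $p_{\eta^e}$ vanishes on the reflection hyperplanes; this is correct and supplies the measure-theoretic detail the paper leaves implicit. In fact absolute continuity of $\mu_{\eta^e}$ alone already makes $\partial\fh^*_+$ a null set, so the vanishing of the density there is more than is needed.

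One small correction: in your final paragraph the bookkeeping with $|W|$ is off. The folding map (by the shifted action) pushes $\mu_{\eta^e(N)}$ forward to exactly $\mu_{\eta(N)}$, not to $|W|$ times it; a pushforward of a probability measure is a probability measure. Likewise the pushforward of $\mu_{\eta^e}$ is a probability measure on $\fh^*_+$, and identifying its density as $|W|\,p_{\eta^e}|_{\fh^*_+}$ uses precisely the $W$-invariance of $p_{\eta^e}$ under the \emph{linear} action (not the shifted one), which you never state. Your first two paragraphs already contain a correct argument via restriction rather than pushforward, so I would drop the third paragraph, and either invoke $W$-invariance of $p_{\eta^e}$ directly (as the paper does) or note that $\mu_{\eta^e(N)}(\fh^*_+)=1/|W|$ for every $N$ and pass to the limit via Portmanteau.
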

\begin{proof}
It suffices to note that $p_{\eta^e}(x)$ is $W$ invariant.
\end{proof}
\begin{rem}
In the non simply-laced case one has to replace $C$ with the symmetrized Cartan  matrix $\bar C$. 
\end{rem}

\section{Type A}\label{TypeA}
In this section we consider the Lie algebra 
$\msl_n$ of rank $r=n-1$ (of type $A_{n-1}$). Let $\al_i$, $\om_i$, $i=1,\dots,n-1$ be simple roots and fundamental weights.
The positive roots of $\msl_n$ are of the form $\al_{i,j}=\al_i+\al_{i+1}+\dots+\al_{j-1}$, $1\le i<j\le n$.  
The nonzero entries of the Cartan matrix $C$ are given by $c_{i,i}=2$, $c_{i,i+1}=c_{i+1,i}=-1$. The inverse matrix  
is the symmetric matrix defined by the formula $(C^{-1})_{i,j}=\frac{i(n-j)}{n}$ for $1\le i\le j\le n-1$.

The limiting density for the random variables $\xi(N)$ (see Theorem \ref{TP})  is  given by the following function on 
$\bR^{n-1}$:
\[
\frac{\sqrt{1/n}}{(2\pi)^{\frac{n-1}{2}}}\exp\left(-\frac{1}{2}\sum_{1\le i,j\le n-1} (C^{-1})_{i,j}x_ix_j\right)
\]
and the limiting density for the random variables $\eta(N)$ (see Theorem \ref{TPD})  is  given by the following function on 
$\bR_{\ge 0}^{n-1}$:
\[
\frac{\sqrt{1/n}}{(2\pi)^\frac{n-1}{2}}\frac{\prod_{1\le i\le j\le n-1} (x_i+\dots+x_j)^2}{\prod_{1\le i\le j\le n-1} (j-i+1)}
\exp\left(-\frac{1}{2}\sum_{1\le i,j\le n-1} (C^{-1})_{i,j}x_ix_j\right).
\]

In the simplest case of $\msl_2$ the densities reduce to
\[
\frac{\sqrt{1/2}}{\sqrt{2\pi}}\exp\left(-\frac{1}{4}x^2\right), x\in\bR \text{ and }
\frac{\sqrt{1/2}}{\sqrt{2\pi}}x^2\exp\left(-\frac{1}{4}x^2\right), x\ge 0.
\]

Recall that the density for the joint GUE eigenvalues distribution equals (up to a scalar factor) 
$\exp(-\frac{1}{2}\sum_{k=1}^n a_k^2)\prod_{1\le i<j\le n} (a_i-a_j)^2$, where $a_1\ge\dots \ge a_n$
are eigenvalues. In \cite{K1} Kerov showed that this density pops up in the limit of the distributions
coming from the decomposition of the large tensor powers of vector representation of $\mgl_n$ into irreducible
components. Let us rewrite this density in Lie theoretic terms. 

\begin{prop}
Assume that $\sum_{k=1}^n a_k=0$. Let $x_j=a_j-a_{j+1}$, $j=1,\dots,n-1$. Then 
\[
\exp\left(-\frac{1}{2}\sum_{k=1}^n a_k^2\right)\prod_{1\le i<j\le n} (a_i-a_j)^2 = \exp(-(x,x)/2)\prod_{\al>0} (x,\al)^2,
\]
where $x=\sum_{j=1}^{n-1} x_j\omega_j$.
\end{prop}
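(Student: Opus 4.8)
The plan is to verify the identity by matching the two factors on each side separately, using the standard dictionary between the $a$-coordinates (the ``matrix entry'' coordinates, thought of as weights of $\mgl_n$ restricted to the traceless part) and the $x$-coordinates (coefficients in the fundamental weight basis of $\msl_n$). Concretely, I would set $a=(a_1,\dots,a_n)$ with $\sum_k a_k=0$ and $x_j=a_j-a_{j+1}$, so that $x=\sum_{j=1}^{n-1}x_j\om_j$ corresponds under the usual identification to the weight $a$ viewed in $\fh^*_\bR$ for $\msl_n$.

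First I would treat the Vandermonde factor. For $1\le i<j\le n$ one has $a_i-a_j=x_i+x_{i+1}+\dots+x_{j-1}$, which is exactly $(x,\al_{i,j})$ for the positive root $\al_{i,j}=\al_i+\dots+\al_{j-1}$ written in the form $(x,\al)=\sum_l x_l l_l$ recorded in the introduction (here $\al=\sum_l l_l\al_l$). Since the map $(i,j)\mapsto \al_{i,j}$ is a bijection between pairs $1\le i<j\le n$ and the positive roots $R_+$ of $\msl_n$, we get
\[
\prod_{1\le i<j\le n}(a_i-a_j)^2=\prod_{\al>0}(x,\al)^2,
\]
which is the second factor on the right-hand side.

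Second I would treat the Gaussian factor, i.e.\ show $\sum_{k=1}^n a_k^2=(x,x)$ under the constraint $\sum_k a_k=0$. One way: the quadratic form $\sum_k a_k^2$ on the hyperplane $\{\sum a_k=0\}$ is precisely the standard invariant form on $\fh^*_\bR$ for $\msl_n$ (the trace form restricted to the Cartan of $\msl_n$), and in the fundamental weight coordinates this form has Gram matrix $C^{-1}$, so $\sum_k a_k^2=\sum_{i,j}(C^{-1})_{i,j}x_ix_j=(x,x)$. Alternatively, and more self-containedly, I would compute directly: from $a_j-a_{j+1}=x_j$ and $\sum a_k=0$ one solves $a_m=\sum_{j=1}^{n-1}(C^{-1})_{?}\dots$; cleaner is to use $a_m = -\tfrac1n\sum_{j}( \min(m,j)\cdot\text{something})$ — but the quickest honest route is: $\sum_k a_k^2 = \sum_k a_k(a_k) $ and telescoping $a_i-a_j = \sum_{l=i}^{j-1}x_l$ gives $\sum_{i<j}(a_i-a_j)^2 = n\sum_k a_k^2 - (\sum_k a_k)^2 = n\sum_k a_k^2$, while independently expanding $\sum_{i<j}(\sum_{l=i}^{j-1}x_l)^2$ and comparing with the known entries $(C^{-1})_{i,j}=i(n-j)/n$ ($i\le j$) of the inverse Cartan matrix shows $\sum_{i<j}(a_i-a_j)^2 = n\sum_{i,j}(C^{-1})_{i,j}x_ix_j$; dividing by $n$ yields $\sum_k a_k^2 = (x,x)$. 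Multiplying the two verified factors gives the claimed equality.

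The only mildly delicate point is the bookkeeping in the Gaussian factor: one must be careful that the identification $x=\sum x_j\om_j \leftrightarrow a\in\fh^*_\bR$ is the correct one (equivalently, that $(\om_i,\om_j)=(C^{-1})_{i,j}$ in type $A$, which is exactly the remark following Theorem \ref{TP}), and that the constraint $\sum_k a_k=0$ is what makes $\sum_k a_k^2$ land on the $\msl_n$ form rather than the $\mgl_n$ one. Everything else is the routine root-combinatorics of type $A$ recalled at the start of this section, so I expect no real obstacle.
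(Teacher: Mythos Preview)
Your proposal is correct and follows essentially the same route as the paper: match the Vandermonde factor via $(x,\al_{i,j})=x_i+\dots+x_{j-1}=a_i-a_j$, and reduce the Gaussian factor to the identity $\sum_k a_k^2=\sum_{i,j}(C^{-1})_{i,j}x_ix_j$ under the constraint $\sum_k a_k=0$, which the paper simply records as a ``direct computation.'' Your second justification of the Gaussian factor (via $\sum_{i<j}(a_i-a_j)^2=n\sum_k a_k^2-(\sum_k a_k)^2$ and counting coefficients against $(C^{-1})_{i,j}=i(n-j)/n$) is a clean way to carry out that direct computation; just prune the abandoned half-attempts in the final write-up.
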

\begin{proof}
Since the positive roots of $\msl_n$ are of the form $\al_{i,j}=\al_i+\al_{i+1}+\dots+\al_{j-1}$, $1\le i<j\le n$, one
has $(x,\al_{i,j})=x_i+x_{i+1}+\dots+x_{j-1}=a_i-a_j$.  Now it's suffices to show that
 \[
\sum_{k=1}^n a_k^2 - \frac{1}{n} (a_1+\dots +a_n)^2 = \sum_{1\le i,j\le n-1} (C^{-1})_{i,j}x_ix_j,
\]
which is the direct computation.
\end{proof}

\end{document}